\renewcommand{\r}[1]{(\ref{#1})} 
\newcommand{\be}[1]{\begin{equation}\label{#1}} 
\newcommand{\ee}{\end{equation}}
\renewcommand{\l}{\lambda}
\newcommand{\R}{{\bf R}}
\newcommand{\C}{{\bf C}}
\DeclareMathOperator {\diag} {diag}
\DeclareMathOperator {\supp} {supp}
\DeclareMathOperator {\Id} {Id}
 \renewcommand{\i}{\mathrm{i}}
 \renewcommand{\d}{\mathrm{d}}
   \newtheorem{theorem}    {Theorem}       [section]
    \newtheorem{prop}       [theorem]       {Proposition}
    \newtheorem{example}    [theorem]       {Example}
    \theoremstyle{definition}
    \theoremstyle{definition}
    \newtheorem{remark} [theorem]    {Remark}
\title[Weyl asymptotics]{Weyl  asymptotics of the transmission eigenvalues for a constant index of refraction }
\author[Ha Pham]{Ha Pham}
\address{Department of Mathematics, Purdue University, West Lafayette, IN 47907}
\author[P. Stefanov]{Plamen Stefanov}
\address{Department of Mathematics, Purdue University, West Lafayette, IN 47907}
\thanks{First author partly supported by a NSF  Grant DMS-1301646}
\begin{document}
\begin{abstract}
We prove  Weyl type of asymptotic formulas for the real and the complex internal transmission eigenvalues when the domain is a ball and the index of refraction is constant. 
\end{abstract}
\maketitle
\section{Introduction}\label{sec_intro}

The purpose of this paper is to prove a Weyl type of asymptotic formula for the counting function of the real Interior Transmission Eigenvalues (ITEs) in the simplest case: the domain is a ball and the index of refraction is a constant. 
Let $\Omega \subset \R^n$ be an open bounded domain with smooth boundary. Let $m>0$ be a smooth function in $\bar\Omega$. 
 The Interior  Transmission problem is given  by the following system
\begin{equation}\label{IITP}
 \begin{cases} (-\Delta  -\lambda^2m) u = 0 \\ (-\Delta - \lambda^2)    v= 0  \\
u|_{\partial\Omega} = v|_{\partial\Omega}, \; \partial_\nu u|_{\partial\Omega} = \partial_\nu v|_{\partial\Omega}  .
 \end{cases}
 \end{equation}
 Any $\lambda \not=0$ for which there exist non-zero $u, v \in H^2(\Omega)$ satisfying \eqref{IITP} is called an Interior Transmission Eigenvalue (ITE). We call the corresponding pair $(u,v)$ an Interior Transmission Eigenpair. One could think of $\mu=\l^2$ as the eigenvalues; then the real $\l^2\not=0$ correspond to the positive $\mu$. 
It is clear by unique continuation that if one of $u$ or $v$ is identically zero, then the other one vanishes as well, therefore the definition is unambiguous. One can replace the first equation with a more general, a possibly anisotropic Helmholtz type of equation, see \r{LVan} below.

 The interior transmission problem was first introduced in the context of inverse scattering problem by Colton and Monk in \cite{Colton_Monk_1988} and by Kirsch in \cite{Kirsch_1986}. Specifically, if one can extend  $v$
 outside $\Omega$ to a solution of the Helmholtz equation   $V$
of the type $V(x)=\int_{|\omega|=1}e^{\i \l\omega\cdot x}g(\omega) \d\omega$, (a Herglotz wave function) with $g\in L^2(S^{n-1})$, then $V$ is an incident wave that does not get scattered by the inhomogeneous media whose index of refraction is $m(x)$ in $\Omega$ and $1$ outside. 
More precisely, $g$ is an $L^2$ function in the kernel of $S-I$, where $S$ is the idenity operator. 
On the other hand, knowledge of the discreteness of the set of ITEs is essential in inverse scattering for acoustic and magnetic wave. It was shown in \cite{Kirsch_Grinberg_2008}, \cite{Cakoni_Colton_2006} and \cite{Cakoni_etal_2011} that
ITEs correspond to those frequencies where the reconstruction algorithm 
 which uses linear sampling method breaks down.

 The discreteness of ITEs was shown by Colton, Kirsch and P\"aiv\"arinta in \cite{Colton_etal_1989} under certain conditions.  
 The existence of real ITEs was established first for radially symmetric $m$, see, e.g., \cite{Colton_Monk_1988, McLaughlin_Polyakov_1994}   first and then for general $m$ by Sylvester and P\"aiv\"arinta in \cite{Sylvester_Paivarinta_2008}. The question on infiniteness and discreteness were extended by \cite{Cakoni_etal_2010a} on showing that the set of ITEs is infinite and discrete, under the assumption that the contrast $m-1$ in the medium does not change sign and is bounded away from zero for $-\Delta$ on $\R^3$. In \cite{Sylvester_2012}, discreteness is shown only under the requirement $m\not=1$ on the boundary, and results for $L^\infty$ and complex valued $m$ are presented as well, see also \cite{Lakshtanov_Vainberg_2013}. Robbiano \cite{Robbiano_2013} extended this to a larger class of complex-valued but smooth $m$. 
 
 Since the ITE problem is not self-adjoint, complex eigenvalues may exist. In \cite{Cakoni_etal_2010a}, it was shown the existence of complex TEs with some assumption on $m$. For infiniteness of the set of complex ITEs, see also the results in \cite{Hitrik_etal_2010}, \cite{Hitrik_etal_2011b} and  \cite{Leung_Colton_2012}. In \cite{Hitrik_etal_2010} and \cite{Hitrik_etal_2011b}, the authors extended the transmission problem to elliptic differential operators.

When the index of refraction $m$ is radially symmetric, ITEs completely determines $m$, as shown in \cite{Cakoni_etal_2010c} and \cite{McLaughlin_Polyakov_1994}.

  There are various recent results on asymptotic  distribution of the ITEs. It was shown in  \cite{Hitrik_etal_2011a}  that almost all ITEs are confined to a parabolic neighborhood of the positive real axis. Robiano in  \cite{Robbiano_2013} gave the following upper  bound
 $$N(r) \leq C r^{n + 2},\quad  r >1 .$$
 In \cite{Dimassi_Petkov_2013}, Dimassi and Petkov proved a sharp upper bound for the counting function of the complex ITEs (counted with their geometric multiplicities, see the discussion below)  in a small sector, namely
$$N(\theta, r) := \sharp \left\{ \lambda \in \mathbb{C} : \lambda\ \text{is an ITE}, \lvert \lambda \rvert \leq r, |\arg \lambda| \leq \theta \right\},$$ of the type 
\[
N(\theta,r)   \leq   \frac{\omega_n }{(2 \pi)^n} r^{n/2} \int_{\Omega} ( 1 +  m(x)^{n/2}) dx   +  O(r^{n - 1- \epsilon}/2),     \quad r \geq r_0(\theta, \epsilon),
\]
compare with  \r{LV} below. 
 In \cite{Lakshtanov_Vainberg_2013a}, Lakshtanov and Vainberg obtained a lower bound of the counting function $N(r)$ of the real ITEs; 
 for some $\delta > 0$
\be{LVa}
N(r) \geq \dfrac{\omega_n}{(2\pi)^n}r^{n}  \int_{\Omega} \left| 1 - 
m^{n/2}\right| 
\, \d x+ O(r^{n-\delta}), \  r\rightarrow \infty, \ \delta > 0.
\ee
They consider a more general case, see \r{LVan}, involving an anisotropic second order operator. 
Here $\omega_n$ is the volume of the unit ball in $\R^n$.  
 An $r^n/C$ lower estimates on the counting function for the real ITEs was also obtained  by Serov and Sylvester in \cite{Serov_Sylvester_2012}. 

The interior anisotropic transmission  problem  
\be{LVan}
\begin{cases} 
 -\Delta u(x) - \lambda^2 u(x) = 0,  & x \in \Omega,\\
 -\nabla A(x) \nabla v - \lambda^2 m(x) v = 0,  &x \in \Omega, \\
u(x) - v(x) = 0, &x\in \partial \Omega,\\
\tfrac{\partial u}{\partial \nu} - \nu\cdot A(x)\nabla v = 0,  & x\in \partial \Omega.
\end{cases}
\ee
has been studied as well. 
Here $A(x) = (a_{i,j}(x))$ is a real smooth symmetric elliptic matrix different from the identity matrix. In \cite{Lakshtanov_Vainberg_2012a, Lakshtanov_Vainberg_2012b, Lakshtanov_Vainberg_2012c, Lakshtanov_Vainberg_2013}, Lakshtanov and Vainberg 
established the discreteness and existence of the ITEs for the anisotropic case. Under some assumptions on $A(x)$,  in \cite{Lakshtanov_Vainberg_2012a}, they also stated  that the counting function
$$N(r) := \sharp \{ \lambda \in \mathbb{C} : \lambda \ \text{is an ITE}, \lvert \lambda \leq r\}$$
has the asymptotic
\be{LV}
N(r) \sim \dfrac{\omega_n}{(2\pi)^n} r^{n} \int_{\Omega} \Big[ 1 + \dfrac{ m^{n/2}(x)}{\det A(x)} \Big] \, dx , \ \ r>1.
\ee 
The assumptions on $A$ however exclude the case $A=\Id$. Such assumptions are needed to guarantee that the problem, microlocally restricted to the boundary, is elliptic in semiclassical sense (called there ``parameter elliptic'').

A Weyl type of asymptotic formula for radially symmetric $m$ corresponding to the momentum $l=0$ (an analogue of our 1D result below but with $m$ a function) was established in 
 \cite{McLaughlin_Polyakov_1994}. Such a formula in the 1D case is implicit in \cite{Sylvester_2013_1D}.

Before start counting the ITEs, we need to define a multiplicity of an ITE first. 
In the literature cited above, different notions are used, often implicit. 
For example, one can write the problem as a spectral problem for a certain fourth order elliptic differential operator, see, e.g., \cite{Sylvester_Paivarinta_2008} and study the corresponding null spaces. In \cite{Dimassi_Petkov_2013} and in \cite{Robbiano_2013}, see also \cite{Sylvester_2012}, the problem is viewed as a 
spectral one for the operator $P=\diag(-m^{-1}\Delta,-\Delta)$ with boundary conditions as in \r{IITP}.  
Then the ITEs are the eigenvalues of that non-selfadjoint operator, and there are might be generalized eigenvectors. The multiplicity is then the dimension of the generalized eigenspace, i.e., the rank of the residue of the resolvent. 
 One can also view the problem as a problem of finding the null-space of the difference $\text{DN}_m(\l)-\text{DN}(\l)$ of the Dirichlet-to-Neumann (DN) maps \cite{Lakshtanov_Vainberg_2013a}; or of the difference of  the Neumann-to-Dirichlet maps, when $\text{DN}_m(\l)$ and $\text{DN}(\l)$ have a common pole (which is exactly when $\l$ is not a simple ITE in the case we study). Then the null space consists of functions on the boundary (which can be related to interior solutions, of course). This formulation includes the spectral parameter in the operators in a non-linear way; and the implicit choice of the multiplicity then is the dimension of the null-space, which  does not include the generalized eigenvectors above. 

In our main theorem, we count the (real) ITEs with their geometric multiplicities defined as the dimension of the span of the eigenpairs $\{u,v\}$ corresponding to that ITE. With this convention, we set
\[
N(r) = \#\left\{\l;\;  0<\l\le r;\; \text{$\l$ is an ITE}             \right\}.
\]
Note that this excludes the contribution of  possible generalized eigenvectors of $P$ above.
 Our computations show that  gene\-ralized eigenvectors do exist, if $\l$ is not simple. The geometric multiplicity is always 1 in the 1D case, see Section~\ref{sec_2.2}; and equal to the dimension $\mu(l)$  of the spherical harmonics eigenspace for the corresponding momentum $l$ in the $n$-dimensional case. We define the algebraic multiplicity of an ITE $\l_0$ as the order of $\l_0$ a zero of the determinant $F_{\nu(l)}(\l)$, see \r{F}, of the system reflecting the boundary conditions when projecting to  a fixed spherical harmonics eigenspace. We observe the following interesting fact  in our case:  the  algebraic multiplicity  
is always either $1$ or $3$ (multiplied by $\mu(l)$ if we view  $F_\nu(\l)$  as a $F_\nu(\l)\Id_{\mathbf{C}^\mu}$).  In the 1D model case, it is  $1$ for all ITEs if and only if $\sqrt{m}$ ($\not=1$) is rational, see Proposition~\ref{pr1}. In the higher dimensional case, this depends on how $\sqrt m$ relates to the zeros of the Bessel functions $J_\nu$, see section~\ref{sec_nD}.

\begin{theorem}\label{thm1} 
Let $0<m\not=1$ be constant, and let $\Omega\subset\R^n$, $n\ge2$ be the unit ball. Then 
\[
\begin{split}
N(r)&= |N_1(r)-N_\gamma(r)|+O(r^{n-1})\\
  &= (2\pi)^{-n}\omega_n^2\left|1-m^{n/2}\right|r^n + O(r^{n-1}). 
\end{split}
\]
\end{theorem}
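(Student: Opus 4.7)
The plan is to exploit the full rotational symmetry and reduce to a family of radial ODEs. Writing eigenfunctions in a basis of spherical harmonics of degree $l$, with $\nu = \nu(l) = l + (n-2)/2$ and $\mu(l)$ the dimension of that eigenspace, the radial factors regular at the origin are proportional to $J_\nu(\sqrt{m}\,\lambda r)$ (for $u$) and $J_\nu(\lambda r)$ (for $v$). The two boundary conditions at $r = 1$ yield a $2\times 2$ homogeneous linear system whose determinant, up to a nonvanishing factor, is
\[
F_\nu(\lambda) = \sqrt{m}\, J_\nu'(\sqrt{m}\,\lambda)\, J_\nu(\lambda) - J_\nu(\sqrt{m}\,\lambda)\, J_\nu'(\lambda),
\]
so that the geometric-multiplicity count becomes
\[
N(r) = \sum_{l \ge 0} \mu(l)\, N^\nu(r), \qquad N^\nu(r) := \#\{\lambda \in (0, r]: F_\nu(\lambda) = 0\}.
\]

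In the bulk range $\nu \le (1 - \delta)\lambda$ (for any fixed $\delta > 0$), I would insert the classical large-argument Bessel asymptotics (uniform in $\nu$ in this range) into $F_\nu$. Setting $\alpha = \lambda - \nu\pi/2 - \pi/4$ and $\beta = \sqrt{m}\lambda - \nu\pi/2 - \pi/4$, this produces
\[
F_\nu(\lambda) = \frac{2}{\pi m^{1/4}\lambda}\Big[\tfrac{1+\sqrt{m}}{2}\sin(\alpha-\beta) + \tfrac{1-\sqrt{m}}{2}\sin(\alpha+\beta)\Big] + \mathrm{l.o.t.}
\]
Since $|(1-\sqrt{m})/(1+\sqrt{m})| < 1$, the bracket is a bounded perturbation of the slow oscillation $\sin((1-\sqrt{m})\lambda)$ and its zeros can be counted by tracking the phase. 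An equivalent, less computational viewpoint recasts $F_\nu(\lambda) = 0$ as equality of two scalar Dirichlet-to-Neumann symbols $\lambda J_\nu'(\lambda)/J_\nu(\lambda) = \sqrt{m}\lambda J_\nu'(\sqrt{m}\lambda)/J_\nu(\sqrt{m}\lambda)$; each side is monotonic between its consecutive poles (by Sturm--Liouville theory), which forces exactly one zero of $F_\nu$ between each pair of unmatched poles. Either way,
\[
N^\nu(r) = \big|N_1^\nu(r) - N_\gamma^\nu(r)\big| + O(1),
\]
uniformly, where $N_1^\nu(r)$ and $N_\gamma^\nu(r)$ count the positive zeros of $J_\nu(\lambda)$ and $J_\nu(\sqrt{m}\lambda)$ in $(0, r]$.

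Multiplying by $\mu(l)$ and summing over $l$, the bulk contributions reconstruct $\bigl|N_1(r) - N_\gamma(r)\bigr|$ up to an $O(r^{n-1})$ error, via the classical Weyl law with remainder for the Dirichlet Laplacian on the unit ball together with the scaling $N_\gamma(r) = N_1(\sqrt{m}\,r) \sim (2\pi)^{-n}\omega_n^2\, m^{n/2}\, r^n$; the $O(1)$ per-sector errors accumulate into $O(r^{n-1})$ since only $l = O(r)$ are active. The main technical obstacle is uniformity in $l$: in the transition window $\nu \sim \lambda$ the elementary asymptotics collapse and one must invoke Olver's uniform Airy-type expansions, while in the forbidden region $\nu > (1+\delta)\lambda$, where $J_\nu(\lambda)$ is exponentially small, one must still certify that $F_\nu$ contributes only $O(r^{n-1})$ to $N(r)$. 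Confining both contributions to within the Weyl remainder for the Dirichlet problem on the ball is the delicate part of the argument.
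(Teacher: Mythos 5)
Your reduction (separation of variables, the determinant $F_\nu$, weighting by $\mu(l)$, summing $O(1)$ per-$\nu$ errors over $l=O(r)$ to get $O(r^{n-1})$, and the identification with the Dirichlet counting functions of the ball) matches the paper. But the core step — counting the zeros of $F_\nu$ per fixed $\nu$ — is not actually established in either of your two routes, and this is precisely where the paper does its real work. In your ``less computational'' route you claim that because each Dirichlet-to-Neumann symbol $\lambda J_\nu'(\lambda)/J_\nu(\lambda)$ and $\gamma\lambda J_\nu'(\gamma\lambda)/J_\nu(\gamma\lambda)$ is monotone between its poles, there is exactly one zero of $F_\nu$ between each pair of unmatched poles. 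Separate monotonicity gives existence (each branch runs from $+\infty$ to $-\infty$), but it does not give uniqueness: two strictly decreasing branches can cross several times, and without uniqueness you only get a lower bound on $N^\nu(r)$, not the asymptotic equality $N^\nu(r)=|N_1^\nu(r)-N_\gamma^\nu(r)|+O(1)$. The paper's key lemma (its Proposition on comparing derivatives) is exactly the missing ingredient: using the Bessel ODE one computes that the difference $P(\lambda)=\gamma J_\nu'(\gamma\lambda)/J_\nu(\gamma\lambda)-J_\nu'(\lambda)/J_\nu(\lambda)$ satisfies $P'(\lambda_0)=1-\gamma^2$ at every simple zero $\lambda_0$ of $F_\nu$, uniformly in $\nu$, so $P$ has at most one zero per interval of continuity. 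Nothing in your sketch substitutes for this; the sign of $\int|u|^2-m\int|v|^2$ (which is what an abstract DN-monotonicity argument would produce) is not evident.

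Your first route has an additional concrete error: the large-argument expansion with phase $\lambda-\nu\pi/2-\pi/4$ is \emph{not} uniform for $\nu\le(1-\delta)\lambda$; in that regime the correct uniform (Debye) phase is $\sqrt{\lambda^2-\nu^2}-\nu\arccos(\nu/\lambda)-\pi/4$, and the per-$\nu$ zero count is genuinely $\nu$-dependent rather than $\approx|1-\gamma|r/\pi$. Since the dominant weight $\mu(l)\sim 2l^{n-2}/(n-2)!$ sits at $l$ comparable to $r$, using the fixed-$\nu$ phase there would distort the leading constant, and you yourself leave the transition and forbidden regions (``the delicate part'') unresolved. Note that the paper needs none of this asymptotic machinery: once the one-crossing-per-branch lemma is in hand, the interval bijection gives $N^\nu(r)=\#\{j_{\nu,k}\le r\}-\#\{j_{\nu,k}\le\gamma r\}+O(1)$ exactly, the weighted sums over $l$ are \emph{by definition} $N_1(r)$ and $N_\gamma(r)=N_1(\gamma r)$, and only the classical Weyl law for the ball (plus rescaling for $\gamma>1$) is invoked at the very end. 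I recommend replacing both of your routes by a proof of the derivative comparison at the intersection points.
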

Here, $\omega_n$ is the volume of the unit ball in $\R^n$. Note that the factor $|1-m^{n/2}|$ is the same one which  appears in the known lower bounds of the real ITEs.  

 One motivation for studying the geometric, rather than the algebraic multiplicity is that the generalized eigenvectors do not show up when going back to the original motivation for studying the ITEs in the first place: can we tell $m$ from $1$ from boundary or scattering data. On the other hand, a higher algebraic multiplicity or closeby complex ITEs would matter if we are interested how sensitive the boundary or the scattering data are when $\l$ is close to $\l_0$. We refer also to Section~\ref{sec_n_mult} and Section~\ref{sec4} for more details. 

In the next section, we study a model problem: scattering on a half-line in one dimension. We  prove an asymptotic formula for the geometric and the  algebraic multiplicities as well and show that the leading term of the latter is an everywhere discontinuous function of $\gamma$. The real eigenvalues counted with their geometric multiplicities have the asymptotic in Theorem~\ref{thm1}. The  complex ones have a counting function with $|1-m^{n/2}|$ replaced by $1+m^{n/2}$ with $n=1$. This is the same factor that appears in the known upper bounds. 

\textbf{Acknowledgments.} The authors thank Vesselin Petkov for the helpful discussions about the multiplicity of the ITEs. 

\section{Transmission eigenvalue and Eigenspaces --- The half line case}\label{sec1D}
 
\subsection{Analysis of the eigenvalues} We analyze here a model problem: scattering on a half-line.  
In fact, the ITEs we get here are the same as in the 3D case when the angular  momentum $l$ is zero, see next section.  This is the main reason we study a half-line instead of  the whole line, as in \cite{Sylvester_2013_1D}.   
We view this as a model problem only; more complete results of this type for variable $m(x)$ can be found in  \cite{McLaughlin_Polyakov_1994}, \cite{ColtonPS_07}, and in \cite{Leung_Colton_2012}.  Sylvester  \cite{Sylvester_2013_1D} studied the same problem  on the whole line with $m$ constant and got precise results about the distribution of the ITEs in that case.

We set  $\gamma=\sqrt m>0$, which is assumed to be constant.

The 1D case we study  is  given by the system
\begin{equation}\label{tranHalfline}
 \begin{cases} -u'' -\lambda^2 u = 0 \\ -v''- \lambda^2 \gamma^2  v= 0  \\
 u(0)=v(0)=0\\
 u(1)=v(1);\ u'(1) = v'(1)
 \end{cases}.\end{equation}

Any $\l\not=0$ for which a non-trivial pair $(u,v)$ solving that system exists, is an ITE. 
For the purpose of this definition, $\gamma$ can be a function.

Then  $u$ and $v$ have the form:
 $$u = a \sin \lambda x,\quad v= b\sin \gamma \lambda x. $$

 The boundary  condition at $x=1$ gives
 \[
\begin{split} u(1) = v(1) \quad &\Longrightarrow \quad a \sin \lambda = b \sin \gamma \lambda , \\
 u' (1) = v'(1)\quad  &\Longrightarrow \quad a\lambda  \cos \lambda   = b \lambda \gamma \cos \gamma \lambda, \end{split}
\]
 which is written as, for $\lambda \neq 0$,
\be{system} 
M(\l) \begin{pmatrix} a\\ b\end{pmatrix}:= 
\begin{pmatrix}
 \sin \lambda  &- \sin \gamma \lambda  \\ 
 \cos \lambda  &- \gamma \cos \lambda \gamma 
 \end{pmatrix} \begin{pmatrix} a \\ b\end{pmatrix} = 0.
\ee
The above system has non trivial solution if the determinant of the matrix is zero, which gives the following  condition:
 \begin{equation}\label{trancon0}
 F(\lambda):= \gamma \sin \lambda   \cos \lambda \gamma  - \sin \lambda \gamma  \cos \lambda =0 .
 \end{equation}
This implies the following. 
   \begin{prop}
The (possibly complex) ITEs are the zeros of $F(\l)$ away from $\l=0$. 
   \end{prop}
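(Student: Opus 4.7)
The plan is to observe that the proposition is essentially a restatement of the calculation performed in the paragraphs leading up to it; the only work is to make the correspondence "non-trivial pair $(u,v)$ $\Longleftrightarrow$ non-trivial $(a,b)$ $\Longleftrightarrow$ $F(\lambda)=0$" airtight, paying attention to the definition that $(u,v)$ must have both entries non-zero.

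First I would fix $\lambda\neq 0$ and recall that the general solutions of $-u''-\lambda^2u=0$ on $[0,1]$ with $u(0)=0$ and of $-v''-\lambda^2\gamma^2 v=0$ with $v(0)=0$ are exactly the one-parameter families $u=a\sin\lambda x$ and $v=b\sin\gamma\lambda x$ (here I use $\gamma>0$ and $\lambda\neq 0$, so $\gamma\lambda\neq 0$ and $\sin\gamma\lambda x\not\equiv 0$). Thus every pair $(u,v)$ solving the first two lines of \eqref{tranHalfline} together with $u(0)=v(0)=0$ is determined by the two scalars $(a,b)\in\mathbb{C}^{2}$.

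Next I would plug into the transmission conditions at $x=1$ to recover the linear system \eqref{system}, and compute
\[
\det M(\lambda)=\sin\gamma\lambda\cos\lambda-\gamma\sin\lambda\cos\gamma\lambda=-F(\lambda),
\]
so that a non-trivial $(a,b)\in\ker M(\lambda)$ exists iff $F(\lambda)=0$. This is the step that relates the characteristic function to the matrix determinant.

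The only point left is to check that a non-trivial solution $(a,b)$ of the matrix system actually yields a non-trivial eigenpair $(u,v)$ in the sense of the ITE definition, i.e.\ that neither $u$ nor $v$ is identically zero. If $a=0$, then \eqref{system} reduces to $b\sin\gamma\lambda=0$ and $b\gamma\cos\gamma\lambda=0$; since $\sin$ and $\cos$ have no common zeros and $\gamma\neq 0$, this forces $b=0$. The symmetric argument shows $b=0$ forces $a=0$. Hence every non-trivial $(a,b)$ has both entries non-zero, and because $\lambda\neq 0$ and $\gamma\lambda\neq 0$ guarantee $\sin\lambda x,\sin\gamma\lambda x\not\equiv 0$ on $[0,1]$, the corresponding $u,v$ are both non-trivial elements of $H^{2}(0,1)$. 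Combining the three steps gives the stated equivalence. There is no real obstacle here; the computation has already been done, and the only care needed is the short bookkeeping argument in the last step.
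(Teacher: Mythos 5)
Your proposal is correct and follows essentially the same route as the paper, which obtains the proposition directly from the preceding reduction: $u=a\sin\lambda x$, $v=b\sin\gamma\lambda x$, the boundary conditions at $x=1$ giving the system \r{system}, and the nontrivial-kernel condition $\det M(\lambda)=-F(\lambda)=0$. The extra bookkeeping you add (that a nontrivial $(a,b)$ has both entries nonzero, since $\sin$ and $\cos$ have no common zeros even for complex arguments, so neither $u$ nor $v$ vanishes identically) is a correct and welcome explicit check of a point the paper leaves implicit.
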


It is easy to compute the following derivatives:
\be{F0}
\begin{split}
F'(\lambda ) &=  
 (1-\gamma^2) \sin \lambda  \sin (\gamma \lambda ),\\
F''(\lambda ) & = (1-\gamma^2)\left(  \cos \lambda  \sin (\gamma \lambda )
 +  \gamma \sin \lambda  \cos (\gamma \lambda )\right),\\
F'''(\lambda ) 
 & = (1-\gamma^2) \Big[ 2\gamma \cos \lambda  \cos \gamma \lambda  - (1+\gamma^2) \sin \lambda  \sin \gamma \lambda \Big].
  \end{split}
\ee
  
  \begin{prop}\label{pr1}
Let $\gamma(1-\gamma^2)\not=0$. Then all (possibly complex) roots of  $F$ have multiplicity one or three. 
If $\gamma$ is irrational, then $F(\lambda )$  has  single roots only. If $\gamma$ is rational, then  it has infinitely many   roots of multiplicity three. Moreover, $\l_0$ is a zero of multiplicity three if and only if $\sin\l_0=\sin(\gamma\l_0)=0$ and then necessarily $\l_0$ is real. 
  \end{prop}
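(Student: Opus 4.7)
The plan is to exploit the explicit formulas for $F'$, $F''$, $F'''$ given in \r{F0} and show that the conditions for a multiple zero force the joint vanishing of $\sin\l_0$ and $\sin(\gamma\l_0)$; this in turn will force $\l_0$ to be a real number of the form $\l_0=k\pi$ with $\gamma\l_0\in\pi\Z$, hence to exist at all it demands $\gamma\in\Q$.

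\medskip

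First I would rule out multiplicities between $1$ and $3$. Suppose $\l_0$ is a zero of order at least $2$. Since $1-\gamma^2\neq 0$, the formula for $F'$ gives $\sin\l_0\sin(\gamma\l_0)=0$. If $\sin\l_0=0$, then from $F(\l_0)=0$ and $\cos\l_0=\pm1$ we read off $\sin(\gamma\l_0)=0$; symmetrically, if $\sin(\gamma\l_0)=0$, then using $\cos(\gamma\l_0)=\pm1$ together with $\gamma\neq 0$ gives $\sin\l_0=0$. Thus a double zero of $F$ necessarily satisfies
\[
\sin\l_0=\sin(\gamma\l_0)=0.
\]
Plugging this into the formula for $F''$ shows $F''(\l_0)=0$ automatically, so every double zero is in fact a zero of order at least $3$. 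On the other hand, the formula for $F'''$ evaluated at such $\l_0$ reduces to $2\gamma(1-\gamma^2)\cos\l_0\cos(\gamma\l_0)$, which is nonzero because the cosines equal $\pm1$ and $\gamma(1-\gamma^2)\neq 0$ by hypothesis. Hence the multiplicity is exactly $3$, and no higher multiplicities occur.

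\medskip

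Next I would establish the characterization. The forward direction is the argument above (multiplicity $\ge 2$ already forces $\sin\l_0=\sin(\gamma\l_0)=0$). For the converse, if $\sin\l_0=\sin(\gamma\l_0)=0$ then both terms in $F(\l_0)$ and in $F'(\l_0)$, $F''(\l_0)$ vanish, while $F'''(\l_0)\neq 0$ by the computation above, so $\l_0$ is precisely a triple zero. The zeros of $\sin$ in $\C$ are exactly $\pi\Z$ (since $\sin z=0\Leftrightarrow e^{2\mathrm{i}z}=1$), so $\sin\l_0=0$ automatically implies $\l_0\in\pi\Z\subset\R$, giving the final reality assertion.

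\medskip

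Finally, the rationality dichotomy is immediate: a triple zero $\l_0$ satisfies $\l_0=k\pi$ and $\gamma\l_0=j\pi$ for some integers $k,j$ with $k\neq 0$ (else $\l_0=0$), so $\gamma=j/k\in\Q$. Conversely, if $\gamma=p/q$ in lowest terms with $q>0$, then every $\l_0=qn\pi$ with $n\in\Z\setminus\{0\}$ simultaneously kills $\sin\l_0$ and $\sin(\gamma\l_0)=\sin(pn\pi)$, producing infinitely many triple zeros. The only step that needs any care is Step~1, making sure that the case analysis $\sin\l_0=0$ vs.\ $\sin(\gamma\l_0)=0$ really forces both to vanish simultaneously; everything else is an evaluation of the explicit derivative formulas \r{F0}.
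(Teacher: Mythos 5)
Your proof is correct and follows essentially the same route as the paper: use the explicit formulas \r{F0} to show a double zero forces $\sin\l_0=\sin(\gamma\l_0)=0$, observe $F''$ then vanishes automatically while $F'''(\l_0)=\pm2\gamma(1-\gamma^2)\neq0$, and deduce the rationality dichotomy from $\l_0\in\pi\Z$, $\gamma\l_0\in\pi\Z$. You merely spell out the case analysis and the reality of $\l_0$ (and the implicit exclusion of $\l_0=0$) a bit more explicitly than the paper does.
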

  \begin{proof}
It is easy to see that   $F(\lambda_0) = F'(\lambda_0) = 0 $ if and only if
 $$   \sin(\lambda_0 ) = 0, \quad  \sin(\gamma \lambda_0) = 0 .$$
Then for any double root $\lambda_0$, 
$$F''(\lambda_0) = F'''(\lambda_0)=0$$
but
  $$F'''(\lambda_0) = \pm 2 \gamma (1-\gamma^2)\neq 0 .$$
Therefore, the multiplicity is either one or three.

Assume now that the multiplicity is three. Then (for $\l_0\not=0$)
  $$\begin{cases} F(\lambda_0) = 0\\
  F'(\lambda_0) = 0 \end{cases}
  \Leftrightarrow \begin{cases} \sin \lambda_0 = 0 \\ 
  \sin \gamma \lambda_0= 0 \end{cases}
 \Leftrightarrow \begin{cases} \lambda_0 =    k \pi, &k \in \mathbb{Z} \setminus \{0\} \\
  \gamma \lambda_0 =  l \pi, & l \in \mathbb{Z}\setminus \{0\} \end{cases}$$
  $$\Rightarrow \gamma =l/k \in \mathbb{Q} .$$
Therefore, multiplicity three implies that $\gamma$ is rational. On the other hand, if $\gamma=p/q$, then it follows from above that $kq\pi$, $k$ integer,  are all roots of multiplicity three. 
  \end{proof}

\begin{example}
When $\gamma=2$,  $F(\lambda) = -2\sin^3\lambda$ which has zeros of multiplicity three only. On the other hand, for $\gamma=3$, $F(\lambda)= -8\cos \lambda\sin^3\lambda$, which has both simple zeros and zeros  of multiplicity three. 

Then the counting function of the ITEs counted with their algebraic multiplicities (see the discussion below)  has to be multiplied by $3$ when $\gamma=2$; and by $2$, when $\gamma=3$ (modulo $O(1)$ in the latter case).  
\end{example}

\subsection{Analysis of the eigenspaces and multiplicities}\label{sec_2.2}
 Let $\gamma$ be irrational and let $\lambda_0$ be a simple ITE. Then $(\sin\lambda_0,-\sin\gamma\l_0)\not=(0,0)$, and by \r{system}, $a=\sin(\gamma\l_0)$, $b=\sin\l_0$, modulo a non-zero multiplicative constant. Then the eigenpair is given by
$$
u= \sin(\gamma\l_0)\sin(\l_0 x), \quad v = \sin\l_0\sin(\gamma\l_0 x).
$$

Assume now that $\lambda_0$ is a triple zero of $F$. Then $\sin\l_0= \sin(\gamma\l_0)=0$ but the matrix  $A$ in \r{system} still has rank one. Then, up to a multiplication by a non-zero constant,
$$
u= \gamma\cos(\gamma\l_0)\sin(\l_0 x), \quad v = \cos\l_0\sin(\gamma\l_0 x).
$$
Note that we do not get a space of larger dimension, even though the root is triple. On the other hand, in this case, $A^{-1}(\l)= (\l-\l_0)^{-3}B(\l)$, with $B(\l_0)\not=0$ (having rank one), i.e., the Laurent expansion of $A^{-1}(\l)$ at $\l=\l_0$ has its most singular order $-3$. One can compute that, when $\gamma=2$, the residue is of  rank one. Therefore, the candidate for the algebraic multiplicity, 3, is not the rank of the residue (which cannot be larger than 2 anyway), but it corresponds to the order of the most singular term. Therefore, if we accept the matrix $A(\l)$ to be our main object, we never get geometric multiplicity three in the ways we tried, while the number three is a natural choice of the algebraic multiplicity of $\l_0$.

Another point of view might be to go back to the motivation to study the ITE in the first place. They are $\l$'s at which we cannot tell $n(x)$ from $1$ by looking at Cauchy data on $\partial\Omega$. The Cauchy data for $n=1$, and $n=\gamma$, respectively, are given by 
\[
\{C(\sin\l,\cos\l)\}, \; \{C(\sin(\gamma\l),\gamma\cos(\gamma\l))\}.
\]
For $C=1$, the first vector is unit, and the second one is unit in another (equivalent, dependent on $\gamma$ only) norm. Therefore, the modulus of the determinant $F(\l)$ can be taken as a measure of how close those two 1D spaces are. 

This discussion suggests the following definition. The geometric multiplicity of an ITE is the dimension of the eigenspace of $A(\l)$ (always one). The algebraic one is the multiplicity as a root of $\det A(\l)$ (one or three).  We will count the ITEs below with their geometric multiplicities (i.e., once). This is consistent with the choice we made in the Introduction. 

\subsection{Counting the ITEs in the 1D case}
Let $N_\R^\text{\rm geom}(r)$ be the number of the real ITEs not exceeding $r$, counted once, i.e., with their geometric multiplicities. Let $N_1(r)$ be the number of the Dirichlet eigenvalues $\l^2$ with $\l\in (0,r]$ of $-d^2/dx^2$, and 
let $N_\gamma(r)$ be related in the same way to $-{\gamma^{-2}}d^2/dx^2$.  Clearly, $N_1(r) =  r/\pi+O(1)$, $N_\gamma (r)= \gamma r/\pi+O(1)$.

\begin{theorem}\label{thm_1D}
Let $0<\gamma\not=1$. Then 
\[\begin{split}
N_\R^\text{\rm geom}(r) &= |N_1(r)-N_\gamma(r)|+O(1)\\
   &= |1-\gamma|\frac{r}\pi +O(1). 
\end{split}
\]
\end{theorem}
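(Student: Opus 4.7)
The plan is to convert the count of real zeros of $F$ into a phase-counting problem for a strictly monotone phase function. Assume first $0<\gamma<1$; the case $\gamma>1$ reduces to this one via the symmetry of \eqref{tranHalfline} under $u\leftrightarrow v$ together with $\lambda\mapsto\gamma\lambda$, which yields the identity $F(\lambda;\gamma)=-\gamma F(\gamma\lambda;1/\gamma)$, so that the count for $\gamma>1$ on $(0,r]$ equals the count for $1/\gamma<1$ on $(0,\gamma r]$. Using product-to-sum, rewrite
\[
F(\lambda)=A\sin((1-\gamma)\lambda)-B\sin((1+\gamma)\lambda),\qquad A:=\tfrac{1+\gamma}{2},\ B:=\tfrac{1-\gamma}{2},
\]
so that $A>B>0$, and view $F$ as the imaginary part of
\[
g(\lambda):=A\,e^{\i(1-\gamma)\lambda}-B\,e^{\i(1+\gamma)\lambda}=e^{\i(1-\gamma)\lambda}\,h(\lambda),\qquad h(\lambda):=A-B\,e^{2\i\gamma\lambda}.
\]
Since $\Rea h(\lambda)\ge A-B=\gamma>0$, the function $h$ takes values in the open right half-plane, so $\arg h$ is a well-defined continuous real function with values in $(-\pi/2,\pi/2)$.

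Define the total phase $\phi(\lambda):=(1-\gamma)\lambda+\arg h(\lambda)$, so that $F(\lambda)=|h(\lambda)|\sin\phi(\lambda)$; hence the real zeros of $F$ coincide with $\phi^{-1}(\pi\mathbb Z)$. The crux of the argument is to show that $\phi$ is non-decreasing. Using the general identity $\phi'=\Img(\bar g g')/|g|^2$ and expanding the product (so that the cross-terms involving $e^{\pm2\i\gamma\lambda}$ combine through $|h|^2=A^2+B^2-2AB\cos(2\gamma\lambda)$ and $2AB=(1-\gamma^2)/2$), a short algebraic simplification yields the clean formula
\[
\phi'(\lambda)=\frac{(1-\gamma^{2})\sin^{2}(\gamma\lambda)}{|h(\lambda)|^{2}}\;\ge\;0,
\]
with equality only on the isolated set $\gamma\lambda\in\pi\mathbb Z$. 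Consequently $\phi$ is a strictly increasing homeomorphism $\R\to\R$ satisfying $\phi(0)=\arg(A-B)=0$ and, since $|\arg h|<\pi/2$, $\phi(r)=(1-\gamma)r+O(1)$.

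The counting is now immediate: the real zeros of $F$ in $(0,r]$ are exactly the points $\phi^{-1}(k\pi)$ for $k=1,\dots,\lfloor\phi(r)/\pi\rfloor$, a strictly increasing sequence of $(1-\gamma)r/\pi+O(1)$ distinct values. By the geometric-multiplicity convention of Section~\ref{sec_2.2}, each is counted once --- including any triple roots, which arise precisely when a critical value of $\phi$ (at $\gamma\lambda\in\pi\mathbb Z$) happens to lie in $\pi\mathbb Z$. Combined with $N_1(r)=r/\pi+O(1)$ and $N_\gamma(r)=\gamma r/\pi+O(1)$, this yields
\[
N_\R^{\text{\rm geom}}(r)=|N_1(r)-N_\gamma(r)|+O(1)=\frac{|1-\gamma|}{\pi}\,r+O(1),
\]
which is the asserted formula. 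The single delicate step is the nonnegativity of $\phi'$; the rest is a structural rewrite that only uses $A>B$, i.e., the assumption $0<\gamma\ne1$ (the strict inequality $A>B$ is what keeps $h$ in the right half-plane and makes the winding of $\arg h$ a bounded $O(1)$ contribution).
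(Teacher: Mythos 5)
Your proposal is correct --- I checked the product-to-sum rewriting $F(\l)=\tfrac{1+\gamma}{2}\sin((1-\gamma)\l)-\tfrac{1-\gamma}{2}\sin((1+\gamma)\l)$, the factorization $g=e^{\i(1-\gamma)\l}h$ with $\Rea h\ge\gamma>0$, the derivative identity $\phi'=\Img(\bar g g')/|g|^2=(1-\gamma^2)\sin^2(\gamma\l)/|h|^2$, and the reduction $F(\l;\gamma)=-\gamma F(\gamma\l;1/\gamma)$ for $\gamma>1$; all are valid, and counting the crossings of $\phi$ with $\pi\mathbb{Z}$ indeed gives $(1-\gamma)r/\pi+O(1)$ zeros, each counted once in accordance with the geometric-multiplicity convention. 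Your route, however, is genuinely different from the paper's. The paper works with the difference of the Dirichlet-to-Neumann functions $\gamma\cot(\gamma\l)-\cot\l$, observes that its derivative equals $1-\gamma^2$ at every zero of $F$ (the analogue of Proposition~\ref{prop_2}), and then counts by a graphical branch argument: a branch of $\cot\l$ meets a branch of the slower-varying $\gamma\cot(\gamma\l)$ at most once, and does so exactly when its interval of definition sits inside a large interval; this forces a case split between irrational $\gamma$ (no common poles) and rational $\gamma$ (common poles, which are the triple roots), plus the rescaling for $\gamma>1$. Your phase-function argument replaces the branch bookkeeping by the single monotonicity statement $\phi'\ge0$ with isolated zeros, so the zeros of $F$ on $(0,r]$ are in bijection with the integers $k$ with $0<k\pi\le\phi(r)$ and the rational/irrational dichotomy disappears: triple roots are simply the crossings where $\phi'$ happens to vanish, and they are automatically counted once. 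What the paper's approach buys is that it is the one that generalizes to the higher-dimensional case (Section~\ref{sec_3.4}), where $F_\nu$ is built from Bessel functions and no exponential-sum factorization is available, whereas your argument exploits the explicit trigonometric structure of the 1D determinant; within the 1D setting your proof is cleaner and gives the error term uniformly in the arithmetic nature of $\gamma$.
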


\begin{proof}
Let $0<\gamma<1$ first. Assume that $\gamma$ is irrational. Then $\sin\l$ and $\sin(\gamma\l)$ do not have common zeros and the ITEs can be characterized as not only as the zeros of $F$ but also as the zeros of the differences $\text{DN}= \text{DN}_\gamma-\text{DN}_1=F/(\sin(\gamma\l)\sin\l)$ of the DN maps
\[
\text{DN}(\l) = \gamma\cot(\gamma\l)- \cot\l.
\]
Since by \r{F0}, $\text{DN}= (1-\gamma^2)F/F'$, at the zeros of $F$, we get $\text{DN}'=1-\gamma^2>0$. This is the crucial observation which makes the counting possible, compare with Proposition~\ref{prop_2}.  Therefore, $\text{DN}$ has at most one zero on any interval, on which it is continuous. In terms of the branches of the graphs (the graph between two consecutive poles) of $\gamma\cot(\gamma\l)$ and $\cot\l$, this means if two such branches intersect, then the former has a greater slope than the latter (and they are both negative); and therefore, they cannot intersect more than once. On the other hand, comparing their asymptotes at the poles, we see that a branch of $\cot\l$ is intersected by   a branch of (the slower varying) $\gamma\cot(\gamma\l)$ if and only if the interval of definition  of the former is contained in the (larger) interval of definition of the latter. Therefore, the number of branches of $\cot\l$ which do not contribute  (exactly one) zero is equal to the number of intervals $(k\pi,(k+1)\pi)$, $k=1,2,\dots$, which contain a zero $m\pi/\gamma$, $m$ integer, of $\sin(\gamma\l)$. Any such interval contains exactly one such zero because $\gamma<1$. 

Let $r>0$. We apply the arguments above for all intervals between zeros of $\gamma\cot(\gamma\l)$ in $[0,r]$. In the partial interval, which contains $r$, we have $O(1)$ ITEs. 
Therefore, up to an $O(1)$ error,  the number of ITEs is that of zeros of $\sin\l$ minus those of $\sin(\gamma\l)$.

Let $\gamma=p/q$ be rational next. Then each common pole of  $\cot\l$ and $\gamma\cot(\gamma\l)$ corresponds to common zeros of $\sin\l$ and $\sin(\gamma\l)$. By the analysis above, any such pole is an ITE. If $\gamma=p/q<1$ is irreducible, then the zeros of $\sin(\gamma\l)$ are $kq\pi/p$, $k$ integer. Those of them which are zeros of $\sin\l$ as well are given by $k=mp$, $m$ integer. Consider the interval between two such consecutive zeros, $I:= ((m-1)q\pi,mq\pi]$. As above, in $I$, the branch with domain $I_l:=((l-1)\pi,l\pi)$, $l=(m-1)q+1,\dots mq$, of the faster oscillating $\cot\l$ are not intersected by some of those of $\cot(\gamma\l)$ if and only if there is a pole of $\cot(\gamma\l)$ in $\bar I_l$; i.e., if some $kq\pi/p$ belongs to $[l\pi,(l+1)\pi]$, see Figure~\ref{pic1}. 
That pole can be an endpoint of $\bar I_l$ only for the most-left and the most-right intervals $I_l$; and there there no intersections of the two graphs. For  the rest of the $I_l$ intervals, and there are $q-2$ of them, the  pole of $\cot(\gamma\l)$ is interior for $I_l$, and therefore,  an interior point for $I$ as well. The number of such $kq\pi/p$ in the interior of $I$ is $p-1$; therefore we have $(q-2)-(p-1)$ ITEs in the interior of $I$; and therefore, $q-p$ ITEs in $I$
since we include the right endpoint in $I$ but not the left one. Then $N(mq\pi) = m(q-p)=mq(1-\gamma)$. This easily implies $N(r) = r(1-\gamma)/\pi+O(1)$. 

In section~\ref{sec_3.4} we use  a lightly modified counting argument which we could have applied to that case as well. 

When $\gamma>1$, one can rescale to show $N^\gamma(r/\gamma) = N^{1/\gamma}(r)$, where we temporarily denote by $N^\gamma$ the counting function $N_\R^\text{\rm geom}$ of the ITEs related to $\gamma$ (not to be confused with $N_\gamma$). Then we use what we proved above. 
\end{proof} 

\begin{figure}[!ht] 
  \centering
  \includegraphics[scale=1,keepaspectratio=true]{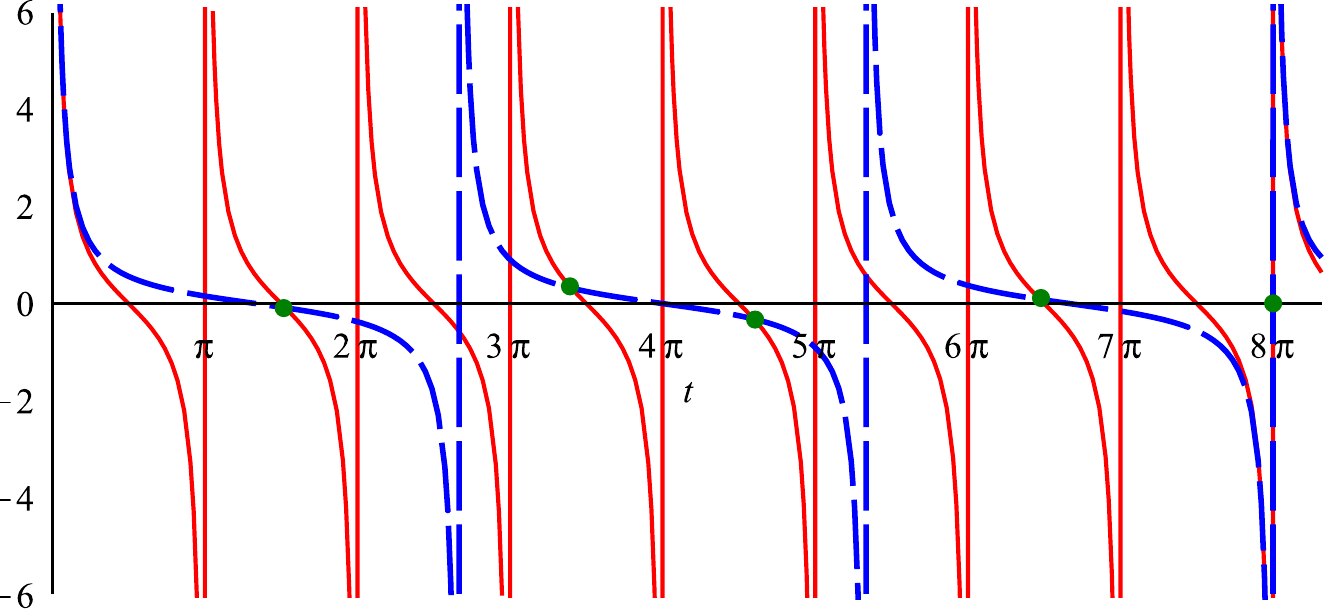}
\caption{The graphs of $\gamma\cot(\gamma \l)$ and $\cot(\l)$ on $(0,8\pi]$ with $\gamma=3/8$. Here, $p=3$, $q=8$. The thick  dots represent the ITEs. The ITE $8\pi$ has  algebraic multiplicity $3$, the other three are simple.}
  \label{pic1}
\end{figure}

Let $N^{\rm alg}_\R(r)$ be the number of the real ITEs not exceeding $r$ counted with their algebraic multiplicities ($1$ or $3$). The we get a different asymptotic.
 
\begin{theorem}\label{thm_1D2}
Let $\gamma=\sqrt{n}$ be a positive constant. 
If $\gamma=\sqrt{m}\not=1$ is irrational, then $N^{\rm alg}_\R(r)=N^{\rm geom}_\R(r)$. If $\gamma=p/q$ is rational and $p/q$ is irreducible,  then 
\[ 
N^{\rm alg}_\R(r) = \left(\Big|1-\frac{p}{q}\Big|+\frac2q\right) r/\pi +O(1). 
\]
\end{theorem}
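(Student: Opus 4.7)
The plan is to reduce everything to Proposition~\ref{pr1} and Theorem~\ref{thm_1D}, since Proposition~\ref{pr1} already pins down exactly which roots of $F$ are triple and Theorem~\ref{thm_1D} already gives the leading asymptotic for $N^{\rm geom}_\R$. The only genuinely new content is counting the triple roots.

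For the irrational case there is nothing to do: Proposition~\ref{pr1} says every root of $F$ is simple, so the algebraic and geometric multiplicities agree at each ITE, hence $N^{\rm alg}_\R(r)=N^{\rm geom}_\R(r)$ identically.

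For the rational case, write $\gamma=p/q$ in lowest terms. By Proposition~\ref{pr1}, $\lambda_0\neq 0$ is a triple root of $F$ exactly when $\sin\lambda_0=\sin(\gamma\lambda_0)=0$, i.e.\ $\lambda_0=k\pi$ and $(p/q)k\pi\in\pi\mathbb{Z}$. Since $\gcd(p,q)=1$, this forces $q\mid k$, so the positive triple roots are exactly $\lambda_0=mq\pi$ with $m=1,2,\dots$. Therefore the number of triple roots in $(0,r]$ equals $\lfloor r/(q\pi)\rfloor = r/(q\pi)+O(1)$. Every other real ITE is simple, so the algebraic count just adds $2$ for each triple root:
\[
N^{\rm alg}_\R(r) \;=\; N^{\rm geom}_\R(r) + 2\,\bigl\lfloor r/(q\pi)\bigr\rfloor.
\]
Combining with Theorem~\ref{thm_1D}, which gives $N^{\rm geom}_\R(r)=|1-p/q|\,r/\pi+O(1)$, yields
\[
N^{\rm alg}_\R(r) \;=\; \left(\Big|1-\frac{p}{q}\Big|+\frac{2}{q}\right)\frac{r}{\pi}+O(1),
\]
as claimed. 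The case $\gamma=p/q>1$ needs no separate treatment: the characterization of triple roots as the common zeros $\lambda_0=mq\pi$ is symmetric in $p$ and $q$, so the same density $1/(q\pi)$ is obtained; alternatively one can use the rescaling $N^{\gamma}(r/\gamma)=N^{1/\gamma}(r)$ already invoked in the proof of Theorem~\ref{thm_1D}.

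There is no real obstacle here beyond bookkeeping; the one point that might deserve a sentence of care is checking that the triple roots really are isolated from the simple ones (so that no double counting occurs), but this is immediate from Proposition~\ref{pr1}: a triple root satisfies $\sin\lambda_0=\sin(\gamma\lambda_0)=0$, while at a simple root at least one of these sines is nonzero, so the two sets are disjoint.
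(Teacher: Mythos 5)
Your proof is correct and follows essentially the same route as the paper: identify the triple roots (via Proposition~\ref{pr1}) as the common zeros of $\sin\l$ and $\sin(\gamma\l)$, i.e.\ the points $mq\pi$, and add $2$ for each of them on top of the geometric count from Theorem~\ref{thm_1D}. The only cosmetic difference is that you invoke Theorem~\ref{thm_1D} as a black box and count the triple roots directly as $\lfloor r/(q\pi)\rfloor$, whereas the paper re-runs its interval-by-interval count with $q-p+2$ in place of $q-p$; both yield the same leading term, and your handling of $\gamma>1$ (symmetry of the triple-root set, or the rescaling identity) is likewise fine.
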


\begin{proof}
If $\gamma$ is irrational, then we just apply Proposition~\ref{pr1}. Let $0<\gamma<1$ be rational. 
In the counting argument above, we counted each common zero of $\sin(\gamma\l)$ and $\sin(\l)$ once; and those are exactly the triple roots of $F$. When we work with the algebraic multiplicities, we should add them two more times. Therefore, in each interval $I$ as in the proof above, we have $q-p+2$ ITEs instead of $q-p$. As above, we need to divide this by $\pi q$ to get the leading term. When $\gamma>1$, we use the rescaling argument above, or direct counting. 
\end{proof} 

\begin{remark}
In both cases, $\gamma<1$ and $\gamma>1$, we get
\[
N^{\rm alg}_\R(r)\le (1+\gamma)r/\pi+O(1), 
\]
which is consistent with the theorem below. 
We have equality if and only if $\gamma$  or $1/\gamma$ is an integer.   
We get the same  leading term as in \r{LV} which estimates the \textit{complex} ITEs, and is also the upper bound in \cite{Dimassi_Petkov_2013}, modulo the multiplicative factor $3\sqrt3$.
\end{remark}

\subsection{Complex ITEs in 1D}

Let $N^{\rm alg}_\C(r)$ be the number of the complex ITEs in $\Re\l>0$ counted with their algebraic multiplicities ($1$ or $3$) of modulus $r$ or less. The fact that they are in a strip parallel to the real line has been proved in  \cite{Sylvester_2013_1D} and \cite{Leung_Colton_2012}, and the latter work provides an explicit bound for $C(\Gamma)$ below for the whole line case. 
 
\begin{theorem}\label{thm_1D3}
Let $\gamma=\sqrt{n}$ be a positive constant, different from $1$. Then  
\[ 
N^{\rm alg}_\C(r) = (1+\gamma) \frac{r}{\pi} +o(r). 
\]
Moreover, all ITEs are symmetric about the real line and included in the strip $|\Im\l|\le C(\gamma)$ for some $C(\gamma)>0$. 
\end{theorem}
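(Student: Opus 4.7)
The plan is to express the transmission determinant as a short exponential sum and then invoke a classical density theorem for zeros of entire functions of exponential type. First, using the product-to-sum identities $\sin A\cos B = \tfrac12[\sin(A+B)+\sin(A-B)]$ in \eqref{trancon0}, one obtains
\begin{equation*}
F(\l) = \frac{\gamma-1}{2}\sin\!\bigl((1+\gamma)\l\bigr) + \frac{\gamma+1}{2}\sin\!\bigl((1-\gamma)\l\bigr),
\end{equation*}
so $F$ is an exponential polynomial of type $\tau=1+\gamma$ with real frequencies $\pm(1+\gamma), \pm(1-\gamma)$, and the coefficients of $e^{\pm\i(1+\gamma)\l}$ are nonzero because $\gamma\ne 1$. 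Two symmetries of the zero set follow at once: $F(\bar\l)=\overline{F(\l)}$ yields reflection about $\R$, and the oddness $F(-\l)=-F(\l)$ yields reflection about the origin.

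Next I would verify confinement of the zeros to a horizontal strip. For $\l=x+\i y$ one has
\begin{equation*}
|\sin((1+\gamma)\l)|^2 = \sinh^2((1+\gamma)y) + \sin^2((1+\gamma)x) \ge \sinh^2((1+\gamma)y),
\end{equation*}
and $|\sin((1-\gamma)\l)|\le\cosh((1-\gamma)y)$. Since $1+\gamma>|1-\gamma|$ for $\gamma>0$ and the leading coefficient $(\gamma-1)/2$ is nonzero by hypothesis, the first term dominates the second for $|y|$ large, so $|F(\l)|>0$ outside some strip $|\Im\l|\le C(\gamma)$.

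The asymptotic density is then read off the indicator function. A direct computation using the above estimates gives
\begin{equation*}
h_F(\theta) := \lim_{r\to\infty}\frac{\log|F(re^{\i\theta})|}{r} = (1+\gamma)\,|\sin\theta|,\qquad \theta\ne 0,\pi,
\end{equation*}
since the exponentials $e^{\pm\i(1+\gamma)\l}$ realize the fastest growth in every non-horizontal direction. Because $F$ is bounded on $\R$ and is an exponential polynomial with real exponents, it belongs to the Cartwright class and is of completely regular growth, so the Cartwright--Levinson zero density theorem applies and yields
\begin{equation*}
n(r) := \#\{\l : F(\l)=0,\ |\l|\le r\} = \frac{h_F(\pi/2)+h_F(-\pi/2)}{\pi}\, r + o(r) = \frac{2(1+\gamma)}{\pi}\,r + o(r),
\end{equation*}
with algebraic multiplicities. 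Using $F(-\l)=-F(\l)$ to halve, and absorbing the $O(1)$ contribution from the (at most finitely many) zeros on the imaginary axis, gives $N^{\rm alg}_{\C}(r) = (1+\gamma)r/\pi + o(r)$.

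The main obstacle is rigorously justifying the density formula, i.e., checking that $F$ lies in the Cartwright class and is of completely regular growth; both properties are standard for real-frequency exponential polynomials. If a self-contained route were preferred, the same leading order could be obtained by applying the argument principle on the rectangle $[0,r]\times[-C,C]$ for $C>C(\gamma)$: the horizontal sides each contribute a continuous argument change of $(1+\gamma)r+O(1)$ because $F(\l) = \pm\tfrac{\i(\gamma-1)}{4}\,e^{\mp\i(1+\gamma)\l}\bigl(1+o(1)\bigr)$ as $\Im\l\to\pm\infty$ uniformly in $\Re\l$, while the vertical sides contribute $O(1)$ (on the left, $F(\i y)$ is purely imaginary, and on the right, for generic $r$, the winding over $|\Im\l|\le C$ is bounded).
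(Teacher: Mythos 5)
Your proof is correct and follows essentially the same route as the paper: write $F$ as a finite exponential sum with extreme frequencies $\pm(1+\gamma)$ (nonvanishing coefficients since $\gamma\neq1$), confine the zeros to a strip by dominance of the extreme terms, apply a classical zero-density theorem for exponential-type functions, and halve using the oddness of $F$ together with $\overline{F(\l)}=F(\bar\l)$. The only difference is cosmetic: you invoke the Cartwright--Levinson density theorem via the indicator $h_F(\theta)=(1+\gamma)\lvert\sin\theta\rvert$, whereas the paper cites the equivalent Titchmarsh/Zworski theorem in terms of the support $[-\gamma-1,\gamma+1]$ of $\hat F$.
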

\begin{proof}

Write
\[
4\i F(\l) =\gamma (e^{\i\gamma\l}+e^{-\i\gamma\l})(e^{\i\l}-e^{-\i\l})- (e^{\i\gamma\l}-e^{-\i\gamma\l})(e^{\i\l}+e^{-\i\l}).
\]
When $\l=\Re\l+\i\Im\l$, $\Im\ll0$, the leading term on the right is $(\gamma-1)e^{\i(\gamma+1)\l}$ with modulus $|\gamma-1|e^{(\gamma+1) |\Im\l|} $, and we get
\[
4|F(\l)|\ge |\gamma-1|e^{(\gamma+1) |\Im\l|} -C(\gamma) |\gamma-1|e^{|1-\gamma||\Im\l|}.
\]
Therefore, for $\Im\le -C'(\gamma)$, $F$ cannot vanish. Since $\overline{F(z)}=F(\bar z)$, this covers the $\Im\l>0$ case, as well. 

To prove the asymptotic formula, we apply a theorem by Titchmarsh \cite{Titchmarsh26} which was generalized to distributions, and successfully used by Zworski \cite{Zworski87} to prove a Weyl type of asymptotic of the resonances for the Schr\"odinger equation  in one dimension. If $ f$ is a distribution on $\R$ and $[a,b]$ is the smallest closed interval containing $\supp\hat f$, then for the counting function $N(r)$ of the zeros of $f$ in $\C$, we have
\[
N(r) = (b-a)r/\pi+o(r). 
\]
We can write $\hat F(\xi)$ as a linear combination of delta functions supported at $-\gamma-1$, $-\gamma+1$, $\gamma-1$, and $\gamma+1$, all with non-zero coefficients. Therefore, $a=-\gamma-1$, $b=\gamma+1$. We get twice the claimed asymptotic but this included the zeros in $\Re\l<0$ which are symmetric to those in $\Re\l>0$ because $F$ is odd. This completes the proof. 
\end{proof}

\begin{remark}
Combining Theorem~\ref{thm_1D2} and Theorem~\ref{thm_1D3}, we can estimate the asymptotic distribution of the non-real ITEs. Since it changes in a singular way when perturbing $\gamma$, this means that we can view the triple (almost real) ITEs  when $\gamma$ is rational as collapsed complex ITEs. In particular, we recover one of the result in \cite{Leung_Colton_2012}: we get existence of infinitely many complex eigenvalues when $\gamma$ or $1/\gamma$ is not an integer. We also get a linear lower $r/C$ bound on their counting function. 
\end{remark}

  \section{ITEs in higher dimensions}\label{sec_nD}

\subsection{Separation of variables} 
Denote by $Y_l^m$, $l=0,1,\dots$, $m=1,\dots \mu(l)$, an orthonormal set of spherical harmonics on $S^{n-1}$. They are the eigenfunctions of the Laplacian $\Delta_{S^{n-1}}$ on $S^{n-1}$. We have
$$
-\Delta_{S^{n-1}}Y_l^m = l(l+n-2)Y_l^m, \quad l=0,1,\dots; \;m=1,\dots, \mu(l),
$$
where, for each $l$, the multiplicity of the eigenvalue $l(l+n-2)$ is given by
\be{l}
\mu (l)= \frac{2l+n-2}{n-2}{l+n-3\choose n-3} = \frac{2l^{n-2}}{(n-2)!}\left(1+O(l^{-1})\right).
\ee
The functions 
\be{j}
j_\nu(\l) := \l^{1-n/2} J_{l+n/2-1}(\l), \quad \nu:=l+n/2-1
\ee
 are bounded  at $\l=0$; in fact, $J_l(\l)\sim c_l\l^l$, as $\l\to0$. 
Any solution $u$ of the Helmholtz equation $(-\Delta-\lambda^2)u=0$ near $0$ has the form
\be{u}
u(x) = \sum_{l=0}^\infty \sum_{m=1}^{\mu(l)} a_{lm} j_{l+n/2-1}(\l r) Y_l^m(\omega), 
\ee
where $x=r\omega$ and $r>0$, $|\omega|=1$ are polar coordinates. 
Similarly, any outgoing solution at $\infty$ has similar expansion, with $J_\nu$ replaced by $H^{(1)}_\nu$. Any solution $v$ of the  equation $(-\Delta-\lambda^2\gamma^2)u=0$ near $0$ has the form
$$
v(x) = \sum_{l=0}^\infty\sum_{m=1}^{\mu(l)} b_{lm} j_{l+n/2-1}(\gamma\l r) Y_l^m(\omega). 
$$
Assume that $u$ and $v$ are in $H^2(\Omega)$, where $\Omega$ is the unit ball. Then $u$ and $u_r$ restricted to $r=1$ are in $H^{3/2}$, and $H^{1/2}$, respectively and the series below converge. 

The boundary conditions in \r{IITP} imply
\be{F}
F_\nu(\l) := \gamma j_{\nu}(\l) j_{\nu}'(\gamma \l) - j_{\nu}(\gamma \l) j_{\nu}'(\l)=0 
\ee
for some $\nu$, which can be written also as 
  $$F_\nu(\l) = \gamma J_{\nu}(\l) J_{\nu}'(\gamma \l) - J_{\nu}(\gamma \l) J_{\nu}'(\l)=0. $$
Indeed, the Cauchy data for the unperturbed equation is 
\[
\left(\sum_{lm} a_{lm} j_{l+n/2-1}(\l ) Y_l^m(\omega), \; \l\sum_{lm} a_{lm} j'_{l+n/2-1}(\l ) Y_l^m(\omega)\right),
\]
and the the Cauchy data for the perturbed equation is
$$
\left(\sum_{lm} b_{lm} j_{l+n/2-1}(\gamma\l ) Y_l^m(\omega), \; \gamma\l\sum_{lm} b_{lm} j'_{l+n/2-1}(\gamma\l ) Y_l^m(\omega)\right).
$$
They match if and only if $a_{lm}$ and $b_{lm}$ solve the system
\be{system2} 
M_\nu \begin{pmatrix} a_{lm} \\ b_{lm}\end{pmatrix}:= 
\begin{pmatrix}
 j_\nu(\l)  &  j_\nu(\gamma\l)  \\ 
  j'_\nu(\l)  & \gamma j'_\nu(\gamma \l) 
 \end{pmatrix}  \begin{pmatrix} a_{lm} \\ b_{lm}\end{pmatrix} = 0.
\ee
Then $F_\nu=\det M_\nu$. If $F_\nu(\l)=0$ for some $\nu$ and $\l$, then \r{system2} has a nonzero solution for that $\nu$. Next, since $J_\nu$ and $J'_\nu$ cannot vanish simultaneously,  that solution consists of $c_m(a_{l},b_{l})$ with all coefficients non-zero; note that $A_\nu$ depends on $l$ but not on $m$. We may assume that $(a_l,b_l)$ is a  unit vector, generating the 1D null space.   Then we get non-zero solutions $u$ and $v$ with that fixed $\nu=l+n/2-1$ and any $m=1,\dots,\mu(l)$ of the form
\be{u1}
u_\nu(x) =   \sum_{m=1}^{\mu(l)} c_m a_{l} j_{\nu(l)}(\l r) Y_l^m(\omega), \quad
v_\nu(x) =   \sum_{m=1}^{\mu(l)} c_m b_{l} j_{\nu(l)}(\gamma l)(\gamma \l r) Y_l^m(\omega)
\ee
(recall that $\nu=l+n/2-1$). This gives a space of eigenpairs of dimension $\mu(l)$. If the same root $\l$ of $F_\nu$ happens to be  a root of one or more than one  $F_{\nu'}$ with $\nu'\not=\nu$, the corresponding eigenspaces are orthogonal, and the total dimension is the sum of the dimensions. Therefore, each such root contributes $\mu(l)$ ITEs to the counting function. 
In particular, the algebraic multiplicity of $\l$, defined as the order of $\l$ as a root of $F_\nu$, plays no role.

Denote by $\lambda_{\nu,j}$ the zeros of $F_\nu$ defined by \r{F}, with $\nu\in \mathbf{Z}+n/2-1$. Then the discussion above implies the following:
\be{Nr}
N(r) =  \sum_{\lambda_{\nu,j}<r}\mu(\nu-n/2+1)    .
\ee
Note that the sum is finite;  by \r{j_zeros} below, it contains zeros associated with $\nu=O(r)$ only. 

\subsection{Analysis of the zeros of $F_\nu$}
The Bessel functions $J_\nu(\l)$ solve
  $$ \l^2 J_{\nu}'' + \l J_{\nu}' + ( \l^2 - \nu^2) J_{\nu} = 0. $$
Thefefore,
  $$ J_{\nu}'' = - \l^{-1} J_{\nu}' - (1 - \nu^2 \l^{-2}) J_{\nu} $$
  $$J_{\nu}''(\gamma \l)  = - (\gamma \l)^{-1} J_{\nu}' (\gamma \l)- (1 - \nu^2 \gamma^{-2} \l^{-2}) J_{\nu} (\gamma \l).$$
 We drop the subscript $\nu$ in $F_\nu$ in the computations below.  We compute $F'(\l)$:
  \begin{align*}
  F'(\l ) = &\  \gamma J'_{\nu}(\l) J_{\nu}'(\gamma \l)
  + \gamma^2 J_{\nu}(\l) J_{\nu}''(\gamma \l) 
  - \gamma J'_{\nu}(\gamma \l) J_{\nu}'(\l) 
  - J_{\nu}(\gamma \l) J_{\nu}''(\l) \\
   = &\ \gamma^2 J_{\nu}(\l) J_{\nu}''(\gamma \l) - J_{\nu}(\gamma \l) J_{\nu}''(\l) \\
   = & \ -\gamma^2 J_{\nu}(\l) (\gamma \l)^{-1} J_{\nu}'(\gamma \l)
  -\gamma^2 J_{\nu}(\l) (1 - \nu^2 \gamma^{-2} \l^{-2}) J_{\nu}(\gamma \l)\\
 & \ + J_{\nu}(\gamma \l)  \l^{-1} J_{\nu}'(\l) + (1 - \nu^2 \l^{-2}) J_{\nu}(\gamma \l)J_{\nu}(\l)
   \\
   =&\  \l^{-1} \Big( -\gamma J_{\nu}(\l) J_{\nu}'(\gamma \l)+
   J_{\nu}(\gamma \l) J_{\nu}'(\l) \Big) 
    + (  1-\gamma^2) J_{\nu}(\l) J_{\nu}(\gamma \l) \\
   = & \ -\l^{-1} F(\l) + (  1-\gamma^2) J_{\nu}(\l) J_{\nu}(\gamma \l)  .
   \end{align*}
        The zeros of $J_{\nu}(\l) $ are at all simple 
    except possibly at $\l= 0 $. 
   Therefore, 
    $\l=\l_0\not=0$ is a zero of $F$ with multiplicity more than $1$ if and only if
    \begin{equation}\label{mul}
    \begin{cases} F(\l_0) = 0 \\
    F'(\l_0) = 0 \end{cases} \Leftrightarrow  \begin{cases} J_{\nu}(\l_0) = 0 \\ J_{\nu}(\gamma \l_0) = 0 \end{cases}.
    \end{equation}
    We compute $F''(\l)$ now: 
  $$F''(\l) = \l^{-2} F(\l) -\l^{-1} F'(\l)
  + (1-\gamma^2) J_{\nu}'(\l) J_{\nu}(\gamma \l)
  + (1-\gamma^2) \gamma J_{\nu}(\l) J'_{\nu}(\gamma \l)$$
  Hence at $\l_0\neq 0 $ satisfying \eqref{mul}, we have $F''(\l_0) = 0 $.
  
  Next, we compute $F'''(\l)$:
  \begin{align*}
  F'''(\l) = &\ -2 \l^{-3} F(\l) + \l^{-2}F'(\l) + \l^{-2} F'(\l) - \l^{-1} F''(\l) \\
  &\  + (1-\gamma^2)\left( J_{\nu}''(\l) J_{\nu}(\gamma \l)
  + 2 \gamma J_{\nu}'(\l) J'_{\nu}(\gamma \l) +\gamma^2 J_{\nu}(\l) J''_{\nu}(\gamma \l)\right).
  \end{align*}
  
  At $\l_0\neq 0 $ satisfying \eqref{mul}, we have  $F''(\l_0) = 0 $ and
  $$F'''(\l_0) =  2\gamma(1-\gamma^2) J_{\nu}'(\l) J_{\nu}'(\gamma \l).$$
    Since $J_{\nu}$ only has simple  roots away from $\l = 0$, 
     we have the following result: 

    \begin{prop}
   For $\gamma(1-\gamma^2)\not=0$, away from $\l=0$,
$$F_\nu(\l) =  \gamma J_{\nu}(\l) J_{\nu}'(\gamma \l) - J_{\nu}(\gamma \l) J_{\nu}'(\l) $$ 
has roots with possible multiplicities one or three. It is three if and only if the root is a common zero of $J_\nu(\l)$ and $J_\nu(\gamma\l)$, and in particular, real. 
  \end{prop}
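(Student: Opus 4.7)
The proof essentially assembles the derivative identities already displayed before the proposition; there is nothing left to compute. The plan is as follows.

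First I would read off the key rearrangement of the identity derived for $F'$, namely
\[
F'(\lambda)= -\lambda^{-1}F(\lambda)+(1-\gamma^2)J_{\nu}(\lambda)J_{\nu}(\gamma\lambda).
\]
If $\lambda_0\neq 0$ is a multiple root of $F$, then $F(\lambda_0)=F'(\lambda_0)=0$ forces $J_{\nu}(\lambda_0)J_{\nu}(\gamma\lambda_0)=0$ since $1-\gamma^2\neq 0$. Combined with the defining formula \eqref{F} and the classical fact that $J_{\nu}$ and $J_{\nu}'$ cannot vanish simultaneously away from the origin (because the non-zero zeros of $J_{\nu}$ are simple), this forces both $J_{\nu}(\lambda_0)=0$ and $J_{\nu}(\gamma\lambda_0)=0$. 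This is precisely the characterization \eqref{mul}.

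Next I would substitute this vanishing into the computed expressions for $F''$ and $F'''$. The formula for $F''(\lambda)$ is a linear combination of $F$, $F'$, and terms carrying an explicit factor of $J_{\nu}(\lambda)$ or $J_{\nu}(\gamma\lambda)$, so every term vanishes at $\lambda_0$ and $F''(\lambda_0)=0$. The computed expression for $F'''(\lambda_0)$ collapses to $2\gamma(1-\gamma^2)J_{\nu}'(\lambda_0)J_{\nu}'(\gamma\lambda_0)$, and since the relevant zeros of $J_{\nu}$ are simple, both $J_{\nu}'(\lambda_0)$ and $J_{\nu}'(\gamma\lambda_0)$ are nonzero; together with $\gamma(1-\gamma^2)\neq 0$ this yields $F'''(\lambda_0)\neq 0$. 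Hence the order of vanishing is exactly $3$, ruling out any multiplicity other than $1$ or $3$.

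Finally, for the assertion that a triple root is real, I would invoke the classical fact that for real $\nu>-1$ all non-zero zeros of $J_{\nu}$ are real (see, e.g., Watson's treatise; it follows from the Sturm--Liouville structure of Bessel's equation). Since $\nu=l+n/2-1\ge n/2-1>-1$ here, any $\lambda_0$ that is simultaneously a zero of $J_{\nu}(\lambda)$ and of $J_{\nu}(\gamma\lambda)$ must be real.

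There is no real obstacle: the only non-computational input is the reality of the non-zero zeros of $J_{\nu}$, and the rest is bookkeeping on the derivative identities already on the page. The one place to be careful is the argument that $J_{\nu}(\lambda_0)J_{\nu}(\gamma\lambda_0)=0$ actually implies both factors vanish; this needs the defining formula \eqref{F} and the simplicity of non-zero zeros of $J_{\nu}$, without which one could only conclude that one of the two factors vanishes.
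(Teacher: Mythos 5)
Your proposal is correct and follows essentially the same route as the paper: the paper's own argument consists precisely of the identities $F'=-\l^{-1}F+(1-\gamma^2)J_\nu(\l)J_\nu(\gamma\l)$, the resulting characterization of multiple roots as common zeros of $J_\nu(\l)$ and $J_\nu(\gamma\l)$ (using simplicity of the nonzero zeros of $J_\nu$), and the evaluation $F''(\l_0)=0$, $F'''(\l_0)=2\gamma(1-\gamma^2)J_\nu'(\l_0)J_\nu'(\gamma\l_0)\ne0$. Your only addition is to spell out the reality of the nonzero zeros of $J_\nu$ for $\nu>-1$, which the paper leaves implicit; that is a correct and welcome clarification.
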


    \subsection{Comparing the derivatives at the intersections --- Higher dimension}
  The zeros of $F_\nu$ are also the intersection points, i.e., the  zeros of the following equation
    \begin{equation}\label{TransEqn}
     \gamma J_{\nu}(\l) J_{\nu}'(\gamma \l) = J_{\nu}(\gamma \l) J_{\nu}'(\l).
     \end{equation}
 Near every simple zero, we have 
    $J_{\nu}(\l_0) , J_{\nu}(\gamma \l_0) \neq 0$. 
Then we can rewrite \eqref{TransEqn} near $\l_0$ as
$$ \gamma      \dfrac{J'_{\nu}(\gamma \l)}{J_{\nu}(\gamma \l)}=  \dfrac{J'_{\nu}(\l)}{J_{\nu}(\l)}. $$ 
We drop the subscripts $\nu$ in $F_\nu$ the next few lines again. 
Since
$$
P := \gamma  \dfrac{J'_{\nu}(\gamma \l)}{J_{\nu}(\gamma \l)}-  \dfrac{J'_{\nu}(\l)}{J_{\nu}(\l)}
= \frac{F}{J_{\nu}( \l)J_{\nu}(\gamma \l)} =(1-\gamma^2) \frac{F}{F'+\l^{-1}F},
$$
at any simple zero $\l_0$, we have
\[
P'(\l_0)= (1-\gamma^2)\frac{F'(F'+\l^{-1}F)- F( F'+\l^{-1}F )' }{ (F'+\l^{-1}F)^2}= 1-\gamma^2. 
\]

   So we have proved the following. 
   \begin{prop}\label{prop_2}
   For $0 < \gamma < 1$, at each intersection point (root) $\l_0$ of $H(\l) = \dfrac{J'_{\nu}(\l)}{J_{\nu}(\l)}$ and $G(\l) = \gamma H(\gamma \l)$, we have
   $$ H'(\l_0) < G'(\l_0).$$
When $\gamma>1$, we have
  $$ H'(\l_0) > G'(\l_0).$$
    \end{prop}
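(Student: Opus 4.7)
The proof is essentially an interpretation of the quotient computation for $P$ given immediately above the statement, so my plan is just to unpack how $P'$ controls the comparison of $H'(\l_0)$ and $G'(\l_0)$.

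First I would identify $P$ explicitly: with $H(\l) = J_\nu'(\l)/J_\nu(\l)$ and $G(\l) = \gamma H(\gamma\l)$, the expression used above is precisely $P(\l) = G(\l) - H(\l)$, hence $P'(\l_0) = G'(\l_0) - H'(\l_0)$. Next I would justify that the identity $P'(\l_0) = 1 - \gamma^2$ derived in the preceding lines applies at any intersection point. By definition, an intersection point $\l_0$ of $H$ and $G$ lies in their common domain, i.e., $J_\nu(\l_0)J_\nu(\gamma\l_0) \neq 0$. By the preceding proposition on multiplicities, this forces $\l_0$ to be a \emph{simple} zero of $F_\nu$, so $F_\nu'(\l_0) \neq 0$. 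Plugging $F(\l_0) = 0$ into the quotient formula $P = (1-\gamma^2)F/(F' + \l^{-1}F)$ then yields $P'(\l_0) = (1-\gamma^2)(F'(\l_0))^2/(F'(\l_0))^2 = 1-\gamma^2$, as recorded above.

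Finally the conclusion is just a matter of reading signs. When $0 < \gamma < 1$ we have $1-\gamma^2 > 0$, and hence $G'(\l_0) - H'(\l_0) > 0$, giving $H'(\l_0) < G'(\l_0)$. When $\gamma > 1$ the same identity has the opposite sign and yields $H'(\l_0) > G'(\l_0)$. There is no real obstacle; the only subtlety is verifying that the denominator $F'(\l_0) + \l_0^{-1}F(\l_0) = F'(\l_0)$ in the quotient formula is nonzero at $\l_0$, which is precisely the simplicity of the zero already established.
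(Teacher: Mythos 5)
Your argument is correct and is essentially the paper's own proof: the paper derives the identity $P=(1-\gamma^2)F/(F'+\l^{-1}F)$ and computes $P'(\l_0)=1-\gamma^2$ at a simple zero, then reads off the sign, exactly as you do. Your added check that the denominator $F'(\l_0)$ is nonzero at an intersection point (since $J_\nu(\l_0)J_\nu(\gamma\l_0)\neq 0$ rules out a multiple root, or directly since $F'(\l_0)=(1-\gamma^2)J_\nu(\l_0)J_\nu(\gamma\l_0)$) is the same implicit step in the paper, made explicit.
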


This proposition is our  main counting argument. 

\subsection{Counting the real ITE's and the  proof of Theorem~\ref{thm1} }   \label{sec_3.4}  
The zeros of $F_\nu$ are of two types: (1) points of intersection of the graphs of $H$ and $G$ away from their poles; and (2) common poles of $G$ and $H$ (common zeros of $J_\nu(\l)$ and $J_\nu(\gamma\l))$. Denote the positive zeroes of $J_\nu$ by $j_{\nu,k}$, $k=1,2,\dots$ (we suppress the dependence on $\nu$). Let $ 0<\gamma<1$ first. We  call below the intervals $(j_{\nu,k},j_{\nu,k+1}]$ between two consecutive zeros of $F_\nu$ ``small intervals''; and the intervals $(j_{\nu,k}/\gamma,j_{\nu,k+1}/\gamma]$ between two consecutive zeros of $J_\nu(\gamma\l)$ will be called ``large intervals''. At the endpoints of each small/large interval, the corresponding function $H$ or $G$, respectively, diverges to $\infty$ on the left; and to $-\infty$ to the right. If a branch of $H$ intersects a branch of $G$, this can happen at one point only, by  Proposition~\ref{prop_2}. We refer to Figure~\ref{pic2}, where $\gamma>1$.

\begin{figure}[!ht] 
  \centering
  \includegraphics[scale=0.9,keepaspectratio=true,trim=0cm 2cm 0cm 1.5cm, clip]{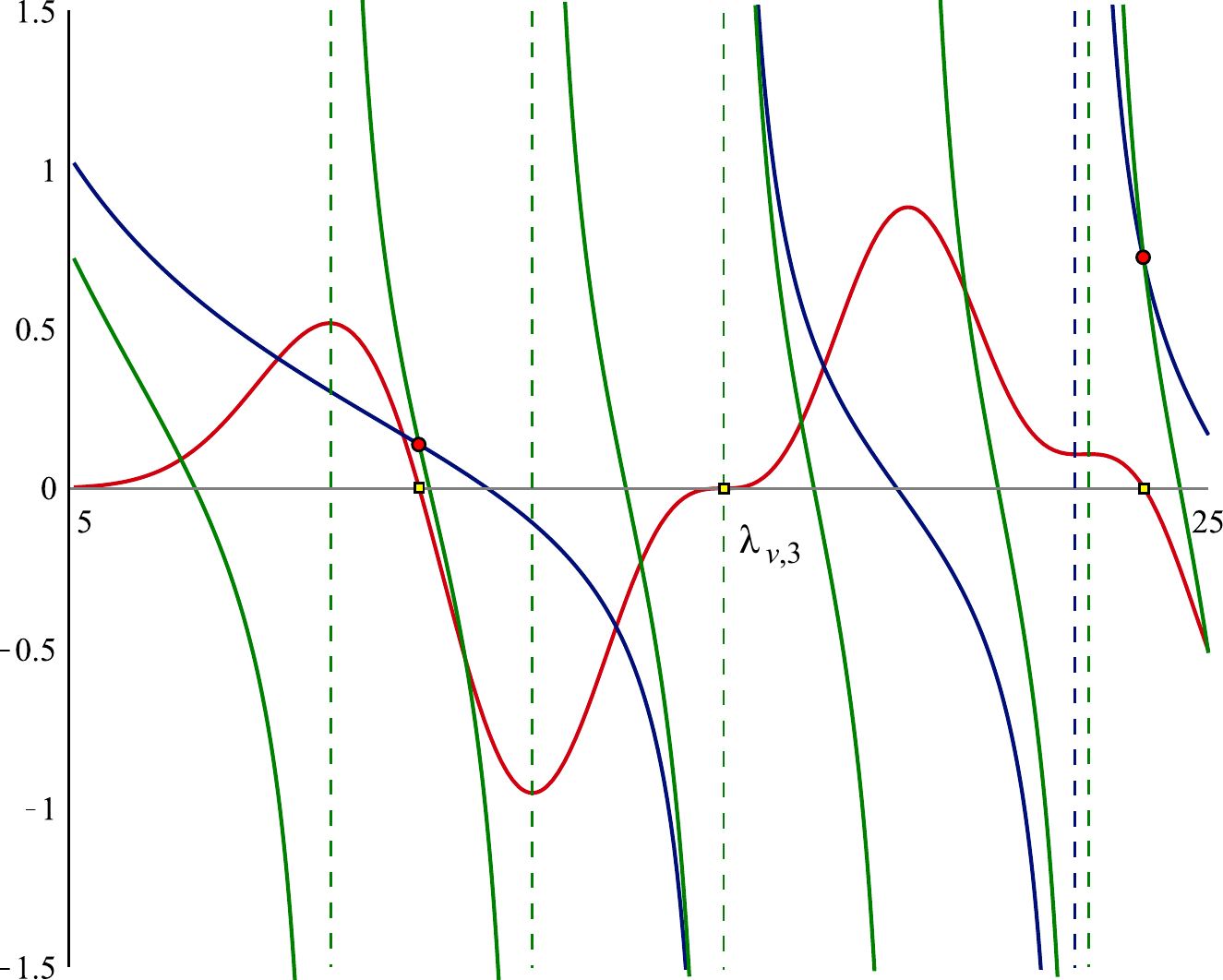}
\caption{The graphs of $F_\nu$ (the smooth curve), $H_\nu$ and $G_\nu$ for $\l\in [5,25]$ and $\nu=11/2$ with $\gamma= \lambda_{\nu,3}/\lambda_{\nu,1}\approx 0.57$. The function  $F_\nu$ has a triple root at $\lambda_{\nu,3}\approx 16.35 $ where the two vertical asymptotes coincide. The zeros to the left  and right   are simple. }
  \label{pic2}
\end{figure}

If a small interval is contained in the interior of  large one, then the graph of $J_\nu(\gamma\l)$ will intersect that of $J_\nu(\l)$, and there is exactly one such point in that small interval. The $\l$ coordinate of that point   is an $\nu$-ITE (a zero of $F_\nu$). If $J_\nu(\gamma\l)$ and $J_\nu(\l)$ have a common pole (vertical asymptote), then that pole is an $\nu$-ITE as well.   Those are the two types of $\nu$-ITEs we may have. In the latter case, a small interval is contained in the closure of a large one and they have a common endpoint. Since we defined all intervals as open on the  left and closed on the right; we may attribute an ITE which is a common pole to the small interval to the left of it. Therefore, we established an bijection between the $\nu$-ITEs and the small intervals which are contained entirely in a large one. The small intervals left without an associated $\nu$-ITE are those which have common points with two large ones; i.e., those containing some of the zeros $j_{\nu ,k}/\gamma$. Therefore, the number of $\nu$-ITEs not exceeding $r$ is equal to the number of zeros of $J_\nu(\l)$ minus that of $J_\nu(\gamma\l)$ up to an error 1, depending on the position on $r$ in the small interval to which it belongs. By \cite[Ch.~7.6.5]{Olver_book}, 
\be{j_zeros}
j_{\nu,k}= k\pi+   \frac12\nu\pi  -\frac14\pi+O(k^{-1}) .
\ee
Therefore, that error, multiplied by the corresponding multiplicity, see \r{Nr}, contributes an $O(r^{n-1})$ term to $N(r)$. This proves Theorem~\ref{thm1} for $0<\gamma<1$. The case $\gamma>1$ follows by rescaling, as in the 1D case. 

\subsection{About the multiplicities again}\label{sec_n_mult} The geometric multiplicity of each zero $\l_0$ of $F_\nu(l)$ is $\mu(l)$, if there is only one $\nu$ so that $F_\nu(\l_0)=0$; otherwise is a sum of such $\mu(\l)$. The ITE $\l_0$ is multiple (triple) if and only if $J_\nu(\l_0)=J_\nu(\gamma\l_0)=0$. Then we cannot tell whether $\gamma=1$ or not if the Cauchy data is $(0,Y_l)$   for any $Y_m$ a linear combination of $Y_l^m$, $m=1,\dots,\mu(l)$. However, we can do it for Cauchy data $(Y_l,0)$, which is the orthogonal complement to that space for a fixed $l$. Therefore, the algebraic multiplicity $3\mu(\l)$ does not play a role here. It only tells us how fast the information about $\gamma$ encoded in the Dirichlet data, ``degenerates'', as $\l\to\l_0$.

\section{Transmission eigenvalues (TEs)}\label{sec4}
It is easy to see that in this case, the interior transmission eigenvalues are also transmission eigenvalues (in the whole $\R^n$). Indeed, $u_\nu$ in the eigenpair in \r{u1} extends from the unit ball to the whole $\R^n$ in a trivial way, by the same formula. Then the function $v_\nu$, extended as $u_\nu$ outside the unit ball as $u_\nu$, is a solution of $(-\Delta-\l^2 m)u=0$, where $m=\gamma^2$ in the unit ball, and $m=1$ outside. This is a transmission problem. We will use the following facts:  $u_\nu$ is $C^\infty$, and its the exterior Cauchy data matches the interior one;  the interior one is the same as that of $v_\nu$ because $(u_\nu,v_\nu)$ is an eigenpair; the exterior Cauchy data of both functions coincide as well because they are equal outside the unit ball. Therefore, $v_\nu$ and its normal derivative do not jump across the unit sphere. The relative scattering matrix in the spherical harmonic base was computed in \cite{St-sharp_bd}. It is a diagonal operator with diagonal entries
$$
S_l(\l) = -\frac
{  h_\nu^{(2)'}(\l )j_\nu(\gamma\l) -\gamma h_\nu^{(2)}(\l )j_\nu'(\gamma\l) }
{  h_\nu^{(1)'}(\l )j_\nu(\gamma\l) - \gamma h_\nu^{(1)}(\l )j_\nu'(\gamma \l) },
$$
where $h_\nu^{(1,2)}(\l) = \l^{1-n/2}H_{l+n/2-1}^{(1,2)}(\l)$. 

Then $A_l(\l) = S_l(\l)-1$ are the diagonal elements of the scattering amplitude $A(\l)$, considered as an operator. A simple calculation yields
$$
A_l(\l) = \frac
{ - j_\nu'(\l )j_\nu(\gamma\l) +\gamma j_\nu(\l )j_\nu'(\gamma\l) }
{  h_\nu^{(1)'}(\l )j_\nu(\gamma\l) - \gamma h_\nu^{(1)}(\l )j_\nu'(\gamma \l) }
= \frac
{ F_\nu(\l)}
{  h_\nu^{(1)'}(\l )j_\nu(\gamma\l) - \gamma h_\nu^{(1)}(\l )j_\nu'(\gamma \l) }.
$$
Therefore, $A_l$ has the same zeros as $F_\nu$. The denominator has complex zeros only, at the resonances; which lie in the lower half-plane, see \cite{St-sharp_bd}. At each such zero, the eigenspace of $S(\l)$ restricted to the spherical harmonics with momentum $l$, has a kernel coinciding with that space; and its dimension is $\mu(l)$. Then we can define the geometric multiplicity of each TE $\l$ as the dimension of the kernel of $A(\l)$ (the latter my include more than one but always finitely many $l$'s). Now, $N(\l)$ is the number of the real $\l\not=0$ for which $A(\l)$ has a non-trivial kernel, counted with their geometric multiplicities. When an ITE $\l_0$ is not a simple toot of $F_\nu$, the algebraic multiplicity shows up if we study $A^{-1}(\l)$ --- the most singular term in the Laurent expansion is $(\l-\l_0)^{-3}$. The residue however,  and the whole singular part cannot have rank  greater than the dimension $\mu(l)$ of the spherical harmonics, which is the geometric multiplicity. 

\section{ITEs are not always TEs} 
We present here a simple example showing that ITEs are not always TEs (the converse is  clearly true). Take any solution $u$ of the Helmholtz equation $(-\Delta-\l^2)u=0$ for some $\l>0$ in $\Omega$ with the following properties: $u>0$ in $\bar\Omega$, $u$ is $C^\infty$ outside $\Omega$ but has no extension as a solution in the whole $\R^n$. Such a solution $u$, $\l$ and $\Omega$ are easy to construct; for example, if $n=3$, fix $\l>0$ and take $u= \cos(\l x)/|x|$ (the real  part of the Green's function, up to a constant), and $\Omega$ can be any domain in the ball $B(0,\pi/\l)$ so that $0\not\in\bar\Omega$. Now, take $\phi \in C_0^\infty(\bar\Omega;\;\R)$, and set $v=u+\epsilon \phi$. Then $v$ solves 
\[
(-\Delta-\l^2m)v=0\quad \text{in $\Omega$}\quad \text{with $m := -\frac{\Delta (u+\epsilon \phi)}{\l^2(u+\epsilon\phi)} =    \frac{u -\epsilon\l^{-2}\Delta \phi}{u+\epsilon\phi}         $}. 
\]
When $|\epsilon|\ll1$, $m$ is a well defined positive function in $\bar\Omega$ and $\l$ is an ITE because $u$ and $v$ have the same Cauchy data on $\partial\Omega$. On the other hand, $\l$ is  not a TE because $u$ does not extend as a solution in the whole $\R^n$.  If $(-\Delta-\l^2)\phi\not\equiv 0$, then $m\not\equiv1$.

\bibliographystyle{siam}
\bibliography{BiblioITE}

\end{document}